\newtheorem{theorem}{Theorem}
\newtheorem{remark}[theorem]{Remark}
\begin{document}


\title{A simple proof for the multivariate Chebyshev inequality}

\author{Jorge Navarro\footnote{Tel/fax numbers: 34 868883508/34 868884182, email address: jorgenav@um.es }\\
Facultad de Matematicas, Universidad de Murcia, 30100 Murcia, Spain.}

\maketitle


\begin{abstract}
In this paper a simple proof of the Chebyshev's inequality for
random vectors obtained by  \cite{C} is obtained. This inequality
gives a lower bound for the percentage of the population of an
arbitrary random vector $\mathbf{X}$ with finite mean
$\mu=E(\mathbf{X})$ and a positive definite covariance matrix
$V=Cov(\mathbf{X})$ whose Mahalanobis distance with respect to $V$
to the mean $\mu$ is less than a fixed value. The proof is based on
the calculation of the principal components.

Keywords: Chebyshev (Tchebychev) inequality,  Mahalanobis distance,
Principal components, Ellipsoid.
\end{abstract}

\section{Introduction}

The very well known Chebyshev's inequality for random variables
provides a lower bound for the percentage of the population in a
given distance with respect to the population mean when the variance
is known. It can be obtained from the Markov's inequality which can
be stated as follows. If $Z$ is a non-negative random variable with
finite mean $E(Z)$ and $\varepsilon>0$, then
$$\varepsilon\Pr(Z\geq
\varepsilon)=\varepsilon\int_{[\varepsilon,\infty)}dF_Z(x)\leq
\int_{[\varepsilon,\infty)} x dF_Z(x)\leq \int_{[0,\infty)} x
dF_Z(x)=E(Z)$$%
(where $F_Z(x)=\Pr(Z\leq x)$ is the distribution function of $Z$),
that is,
\begin{equation}\label{Markov}
\Pr(Z\geq \varepsilon)\leq \frac{E(Z)}{\varepsilon}.
\end{equation}
Chebyshev's inequality is then obtained as follows. If $X$ is a
random variable with finite mean $\mu=E(X)$ and variance
$\sigma^2=Var(X)>0$, then by taking $Z=(X-\mu)^2/\sigma^2$ in (\ref{Markov}),
we get
\begin{equation}\label{Cheby1}
\Pr\left(\frac{(X-\mu)^2}{\sigma^2} \geq \varepsilon \right)\leq
\frac{1}{\varepsilon}
\end{equation}
for all $\varepsilon>0$. It can also be written as
$$\Pr((X-\mu)^2 < \varepsilon \sigma^2)\geq 1-
\frac{1}{\varepsilon}$$ or as
$$\Pr(|X-\mu|< r)\leq
1-\frac{\sigma^2}{r^2}$$ for all $r>0$.

There are several extensions of these results to the multivariate
case (see e.g. \cite{C,MO} and the references therein). Recently,
\cite{C} proved the following Chebyshev's inequality
$$\Pr( (\mathbf{X}-\mu)'V^{-1}(\mathbf{X}-\mu) \geq \varepsilon )\leq \frac n \varepsilon$$
for all $\varepsilon>0$ and for all random vectors
$\mathbf{X}=(X_1,\dots,X_n)'$ ($w'$ denotes the transpose of $w$)
with finite mean vector $\mu=E(\mathbf{X})$ and positive definite
covariance matrix
$V=Cov(\mathbf{X})=E((\mathbf{X}-\mu)(\mathbf{X}-\mu)')$. Extensions
of Chen's result to Hilbert-space-valued and Banach-space-valued
random elements can be seen in \cite{PR} and \cite{ZH},
respectively.

In this paper a new (in my knowledge) proof for Chen's result is
given. The proof is based on the calculation of the principal
components. The main advantage of the new proof is that it is so
simple that it can be can be included in all the basic multivariate
analysis text books. Some comments are also included after the
proof. In these comments, the case in which $|V|=0$ is analyzed.
Also some consequences in regression analysis are given.

\section{Main result}

\begin{theorem}
Let $\mathbf{X}=(X_1,\dots,X_n)'$ be a random vector
with finite mean vector $\mu=E(\mathbf{X})$ and positive definite
covariance matrix
$V=Cov(\mathbf{X})$. Then
\begin{equation}\label{CMul}
\Pr( (\mathbf{X}-\mu)'V^{-1}(\mathbf{X}-\mu) \geq \varepsilon )\leq \frac n \varepsilon
\end{equation}
for all $\varepsilon>0$
\end{theorem}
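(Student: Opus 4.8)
The plan is to reduce the multivariate statement to an application of the scalar Markov inequality~(\ref{Markov}), using the principal-component decomposition advertised in the introduction. Since $V$ is symmetric and positive definite, I would diagonalize it by the spectral theorem: there is an orthogonal matrix $P$ (whose columns are the eigenvectors of $V$) and a diagonal matrix $D=\mathrm{diag}(\lambda_1,\dots,\lambda_n)$ of positive eigenvalues with $V=PDP'$ and hence $V^{-1}=PD^{-1}P'$. Define the vector of principal components $\mathbf{Y}=P'(\mathbf{X}-\mu)$, so that $\mathbf{Y}=(Y_1,\dots,Y_n)'$ has mean $E(\mathbf{Y})=P'(\mu-\mu)=\mathbf{0}$ and covariance $Cov(\mathbf{Y})=P'VP=D$. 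In particular $E(Y_i^2)=Var(Y_i)=\lambda_i$ for each $i$, since the $Y_i$ have zero mean.

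The key algebraic step is to rewrite the quadratic form in terms of these components. Because $P'P=I$, I compute
\begin{equation}\label{quadform}
(\mathbf{X}-\mu)'V^{-1}(\mathbf{X}-\mu)=\mathbf{Y}'D^{-1}\mathbf{Y}=\sum_{i=1}^n\frac{Y_i^2}{\lambda_i}.
\end{equation}
This identity is the heart of the argument: the Mahalanobis distance decouples into a plain sum of $n$ normalized squares. Now let $Z$ denote this random variable. Its expectation is immediate by linearity,
\begin{equation}\label{expect}
E(Z)=\sum_{i=1}^n\frac{E(Y_i^2)}{\lambda_i}=\sum_{i=1}^n\frac{\lambda_i}{\lambda_i}=n,
\end{equation}
so the expected squared Mahalanobis distance equals the dimension $n$, regardless of the underlying distribution.

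With $E(Z)=n$ in hand, the conclusion follows by feeding $Z=(\mathbf{X}-\mu)'V^{-1}(\mathbf{X}-\mu)\ge 0$ directly into Markov's inequality~(\ref{Markov}): for every $\varepsilon>0$,
\begin{equation}\label{final}
\Pr\bigl((\mathbf{X}-\mu)'V^{-1}(\mathbf{X}-\mu)\ge\varepsilon\bigr)=\Pr(Z\ge\varepsilon)\le\frac{E(Z)}{\varepsilon}=\frac{n}{\varepsilon},
\end{equation}
which is exactly~(\ref{CMul}). I do not expect a genuine obstacle here; the only point demanding a little care is the existence of the diagonalization with strictly positive $\lambda_i$, which is guaranteed precisely by the hypothesis that $V$ is positive definite (this is also what makes $V^{-1}$ and each $1/\lambda_i$ well defined). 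A secondary point worth stating explicitly is that the finiteness of $E(Z)=n$ justifies applying Markov's inequality, and that the mean-zero property of $\mathbf{Y}$ is what turns $E(Y_i^2)$ into the variance $\lambda_i$ in~(\ref{expect}).
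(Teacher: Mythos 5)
Your proof is correct and follows essentially the same route as the paper: spectral decomposition of $V$, principal components, $E(Z)=n$, then Markov's inequality. The only cosmetic difference is that the paper standardizes the components as $\mathbf{Y}=D^{-1/2}T'(\mathbf{X}-\mu)$ so that $Cov(\mathbf{Y})=I_n$ and $Z=\sum_i Y_i^2$, whereas you keep $\mathbf{Y}=P'(\mathbf{X}-\mu)$ with $Cov(\mathbf{Y})=D$ and write $Z=\sum_i Y_i^2/\lambda_i$; the two computations are equivalent.
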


\begin{proof}
Let us consider the random variable
$$Z=(\mathbf{X}-\mu)'V^{-1}(\mathbf{X}-\mu).$$
As $V$ is positive definite, then $Z\geq 0$. Moreover, as $V$ is also symmetric, there exists an ortogonal matrix $T$ such that $TT'=T'T=I_n$ and $T'VT=D$, where $I_n$ is the identity matrix of order $n$ and $D=diag(\lambda_1,\dots,\lambda_n)$ is the diagonal matrix with the ordered eigenvalues  $\lambda_1\geq \dots \geq \lambda_n>0$. Then $V=TDT'$ and $V^{-1}=TD^{-1}T'$. Therefore
$$Z=(\mathbf{X}-\mu)'TD^{-1}T'(\mathbf{X}-\mu)=[D^{-1/2}T'(\mathbf{X}-\mu)]'
[D^{-1/2}T'(\mathbf{X}-\mu)]=\mathbf{Y}'\mathbf{Y},$$
where
$$\mathbf{Y}=D^{-1/2}T'(\mathbf{X}-\mu)$$
and  $D^{-1/2}=diag(\lambda_1^{-1/2},\dots,\lambda_n^{-1/2})$. The random vector $\mathbf{Y}$ satisfies
$$E(\mathbf{Y})=E(D^{-1/2}T'(\mathbf{X}-\mu))=D^{-1/2}T'E(\mathbf{X}-\mu)=0$$
and
$$Cov(\mathbf{Y})=Cov(D^{-1/2}T'(\mathbf{X}-\mu))=D^{-1/2}T'VTD^{-1/2}
=D^{-1/2}DD^{-1/2}=I_n.$$
Therefore
$$E(Z)=E(\mathbf{Y}'\mathbf{Y})=E\left(\sum_{i=1}^n Y_i^2\right)=\sum_{i=1}^n E(Y_i^2)=\sum_{i=1}^n Var(Y_i)=n.$$
Hence, from Markov's inequality (\ref{Markov}), we get
$$\Pr(Z\geq \varepsilon)=\Pr((\mathbf{X}-\mu)'V^{-1}(\mathbf{X}-\mu)\geq \varepsilon)\leq \frac{E(Z)}{\varepsilon}=\frac n {\varepsilon}$$
for all $\varepsilon>0$.
\end{proof}

\begin{remark}
Of course, if $n=1$ in (\ref{CMul}), then the univariate Chebyshev inequality (\ref{Cheby1}) is obtained. The vector $\mathbf{Y}=D^{-1/2}T'(\mathbf{X}-\mu)$ used in the preceding proof is the vector of the standardized principal components of $\mathbf{X}$. The inequality in (\ref{CMul}) can also be written as
\begin{equation} 
\Pr( (\mathbf{X}-\mu)'V^{-1}(\mathbf{X}-\mu)< \varepsilon )\geq 1- \frac n \varepsilon
\end{equation}
for all $\varepsilon>0$. This inequality says that the ellipsoid
$$E_\varepsilon=\{\mathbf{x}\in \mathbb{R}^n: (\mathbf{x}-\mu)'V^{-1}(\mathbf{x}-\mu)< \varepsilon\}$$
contains at least the $100(1- n/\varepsilon)\%$ of the population for all $\varepsilon\geq n$
for any random vector $\mathbf{X}$. It is well known that the principal components coincide with the projections to the principal axes of that ellipsoid. For example, for $\varepsilon=4n$, we have
$$\Pr( (\mathbf{X}-\mu)'V^{-1}(\mathbf{X}-\mu)< 4n )\geq 0.75.$$
The inequality can also be written as
$$\Pr(d_V(\mathbf{X},\mu)<r) \geq 1- \frac n {r^2},$$
where
$$d_V(x,y)=\sqrt{(\mathbf{x}-\mathbf{y})'V^{-1}(\mathbf{x}-\mathbf{y})}$$
is the Mahalanobis distance associated with the positive definite
matrix $V$. Hence (\ref{CMul}) gives a lower bound for the
percentage of points in spheres ``around'' the mean in the
Mahalanobis distance. A comparison between the volume in these
spheres and that in the regions containing the same probability in
other multivariate Chebyshev inequalities can be seen in \cite{C}.
\end{remark}

\begin{remark}
Recall that in the preceding theorem $\mathbf{X}$ is an arbitrary non-singular random vector with finite mean and finite variances-covariances. In particular, if $\mathbf{X}$ has a normal distribution, then the ellipsoid $E_\varepsilon$ coincides with regions determined by the level curves of the normal probability density function. Moreover, in this case, the exact probability can be obtained by using that $\mathbf{Y}$ is normally distributed with mean $E(\mathbf{Y})=0$ and $Cov(\mathbf{Y})=I_n$. Hence $Y_1,\dots,Y_n$ are independent and identically distributed with a common standard normal distribution and
$$Z=\mathbf{Y}'\mathbf{Y}=Y_1^2+\dots+Y_n^2$$
has a chi-squared distribution with $n$ degrees of freedom (a well known result, see, e.g., page 39 in \cite{MKB}). For example, for $n=2$, we obtain
$$\Pr( (\mathbf{X}-\mu)'V^{-1}(\mathbf{X}-\mu)< 8 )=\Pr(\chi_2^2<8)=0.9816844\geq 0.75.$$
\end{remark}

\begin{remark}
If $|V|=0$ and $\mathbf{X}$ is non-degenerate, that is, $$\lambda_1\geq \dots\geq \lambda_{m-1}>\lambda_m=\dots=\lambda_n=0$$ for an $m\in\{2,\dots,n\}$, then we can consider $\mathbf{Y}=BT'(\mathbf{X}-\mu)$, where $B=diag(\lambda_1^{-1/2},\dots,\lambda_{m-1}^{-1/2},0,\dots,0)$ and by using the preceding theorem we obtain
$$\Pr( (\mathbf{X}-\mu)'TCT'(\mathbf{X}-\mu)< \varepsilon )\geq 1- \frac {m-1} \varepsilon,$$
where $C=diag(\lambda_1^{-1},\dots,\lambda_{m-1}^{-1},0,\dots,0)$. This inequality says that the ellipsoid  on the region determined by the point $\mu$ and the $m-1$ first principal components contains at least the $100(1-(m-1)/\varepsilon)\%$ of the population opf the random vector $\mathbf{X}$.
\end{remark}

\begin{remark}
The inequality in (\ref{CMul}) can be applied to conditional random vectors. For example, if we consider $(\mathbf{X},\mathbf{Y})$ where $\mathbf{X}=(X_1,\dots,X_k)$ and $\mathbf{Y}=(X_{k+1},\dots,X_n)$, $\mu(\mathbf{x})=E(\mathbf{Y}|\mathbf{X}=\mathbf{x})$ is finite and
$V(\mathbf{x})=Cov(\mathbf{Y}|\mathbf{X}=\mathbf{x})$ is a positive definite matrix for a fixed $\mathbf{x}$, then
\begin{equation} 
\Pr( [\mathbf{Y}-\mu(\mathbf{x})]'[V(\mathbf{x})]^{-1}[\mathbf{Y}-\mu(\mathbf{x})] \geq \varepsilon |\mathbf{X}=\mathbf{x})\leq \frac {n-k} \varepsilon
\end{equation}
for all $\varepsilon>0$. This inequality gives a confidence region around the regression map
$\mu(\mathbf{x})=E(\mathbf{Y}|\mathbf{X}=\mathbf{x})$. Similar results can be obtained for other conditional random vectors as, e.g., $(\mathbf{Y}|\mathbf{X}\geq \mathbf{x})$.
\end{remark}


\section*{Acknowledgements}

This work is partially supported by Ministerio de Ciencia y
Tecnología de España under grant MTM2009-08311-FEDER, Ministerio de
Economía y Competitividad under grant MTM2012-34023-FEDER  and
Fundación Séneca of C.A.R.M. under grant 08627/PI/08.







\end{document}